\newcommand{\quotes}[1]{``#1''}
\theoremstyle{plain} 
\newtheorem{theorem}{Theorem}[section]
\newtheorem{proposition}[theorem]{Proposition}
\theoremstyle{definition} 
\newtheorem{definition}[theorem]{Definition}
\newtheorem{example}[theorem]{Example}
\begin{document}

\title{\bf Introducing fully UP-semigroups\footnote{This work was financially supported by the University of Phayao.}}

\author{Aiyared Iampan\footnote{Corresponding author. Email: \texttt{aiyared.ia@up.ac.th}} \\[.3cm] Department of Mathematics, School of Science \\ University of Phayao, Phayao 56000, Thailand}\maketitle

\noindent\hrulefill

\begin{abstract}
In this paper, we introduce some new classes of algebras related to UP-algebras and semigroups, called a left UP-semigroup, a right UP-semigroup, a fully UP-semigroup, a left-left UP-semigroup, a right-left UP-semigroup, a left-right UP-semigroup, a right-right UP-semigroup, a fully-left UP-semigroup, a fully-right UP-semigroup, a left-fully UP-semigroup, a right-fully UP-semigroup, a fully-fully UP-semigroup, and find their examples.
\end{abstract}

\begin{flushleft}
\textbf{Mathematics Subject Classification:} 08A99, 03G25 \\
\textbf{Keywords:} semigroup, UP-algebra, fully UP-semigroup
\end{flushleft}

\noindent\hrulefill


\section{Introduction and Preliminaries}\numberwithin{equation}{section}


In the literature, several researches introduced a new class of algebras related to logical algebras and semigroups such as:
In 1993, Jun, Hong and Roh \cite{Jun1993pp59} introduced the notion of BCI-semigroups.
In 1998, Jun, Xin and Roh \cite{Jun1998pp309} renamed the BCI-semigroup as the IS-algebra.
In 2006, Kim \cite{Kim2006pp67} introduced the notion of KS-semigroups.
In 2011, Ahn and Kim \cite{Ahn2011pp8} introduced the notion of BE-semigroups.
In 2015, Endam and Vilela \cite{Endam2015pp2901} introduced the notion of JB-semigroups.
In 2016, Sultana and Chaudhary \cite{Sultana2016pp1} introduced the notion of BCH-semigroups.
In this paper, we introduce some new classes of algebras related to UP-algebras and semigroups, called a left UP-semigroup, a right UP-semigroup, a fully UP-semigroup, a left-left UP-semigroup, a right-left UP-semigroup, a left-right UP-semigroup, a right-right UP-semigroup, a fully-left UP-semigroup, a fully-right UP-semigroup, a left-fully UP-semigroup, a right-fully UP-semigroup, a fully-fully UP-semigroup, and find their examples.

\medskip

Before we begin our study, we will introduce the definition of a UP-algebra.


\begin{definition}\label{UP1}\cite{Iampan2014}
An algebra $A=(A,\cdot,0)$ of type $(2,0)$ is called a \textit{UP-algebra}, where $A$ is a nonempty set, $\cdot$ is a binary operation on $A$, and $0$ is a fixed element of $A$ (i.e., a nullary operation) if it satisfies the following axioms: for any $x,y,z\in A$,
\begin{description}
\item[(UP-1)] $(y\cdot z)\cdot((x\cdot y)\cdot(x\cdot z))=0$,
\item[(UP-2)] $0\cdot x=x$,
\item[(UP-3)] $x\cdot0=0$, and
\item[(UP-4)] $x\cdot y=y\cdot x=0$ implies $x=y$.
\end{description}
\end{definition}


In a UP-algebra $A=(A,\cdot,0)$, the following assertions are valid (see \cite{Iampan2014,Iampan2018book}).
\begin{align}
&\label{ppt01} (\forall x\in A)(x\cdot x=0), \\
&\label{ppt02} (\forall x,y,z\in A)(x\cdot y=0, y\cdot z=0 \Rightarrow x\cdot z=0), \\
&\label{ppt03} (\forall x,y,z\in A)(x\cdot y=0 \Rightarrow (z\cdot x)\cdot (z\cdot y)=0), \\
&\label{ppt04} (\forall x,y,z\in A)(x\cdot y=0 \Rightarrow (y\cdot z)\cdot (x\cdot z)=0), \\
&\label{ppt05} (\forall x,y\in A)(x\cdot (y\cdot x)=0), \\
&\label{ppt06} (\forall x,y\in A)((y\cdot x)\cdot x=0 \Leftrightarrow x=y\cdot x), \\
&\label{ppt07} (\forall x,y\in A)(x\cdot (y\cdot y)=0), \\
&\label{UP2_add1} (\forall a,x,y,z\in A)((x\cdot(y\cdot z))\cdot(x\cdot((a\cdot y)\cdot (a\cdot z)))=0), \\
&\label{UP2_add2} (\forall a,x,y,z\in A)((((a\cdot x)\cdot (a\cdot y))\cdot z)\cdot((x\cdot y)\cdot z)=0), \\
&\label{UP2_add3} (\forall x,y,z\in A)(((x\cdot y)\cdot z)\cdot(y\cdot z)=0), \\
&\label{UP2_add4} (\forall x,y,z\in A)(x\cdot y=0 \Rightarrow x\cdot (z\cdot y)=0), \\
&\label{UP2_add5} (\forall x,y,z\in A)(((x\cdot y)\cdot z)\cdot(x\cdot(y\cdot z))=0),\, \text{and} \\
&\label{UP2_add6} (\forall a,x,y,z\in A)(((x\cdot y)\cdot z)\cdot(y\cdot(a\cdot z))=0).
 \end{align}


Let $X$ be a universal set. Define two binary operations $\cdot$ and $\ast$ on the power set of $X$ by putting, for all $A,B\in \mathcal{P}(X)$,
\begin{eqnarray}
\label{eq001} A\cdot B &=& A^{\prime}\cap B, \\
\label{eq002} A\ast B &=& A^{\prime}\cup B.
\end{eqnarray}
Then $(\mathcal{P}(X),\cdot,\emptyset)$ is a UP-algebra and we shall call it the \textit{power UP-algebra of type 1} \cite{Iampan2014}, and $(\mathcal{P}(X),\ast,X)$ is a UP-algebra and we shall call it the \textit{power UP-algebra of type 2} \cite{Iampan2014}.
Now, define four binary operations $\odot,\otimes,\boxdot$ and $\boxtimes$ on the power set of $X$ by putting, for all $A,B\in \mathcal{P}(X)$,
\begin{eqnarray}
\label{eq003} A\odot B &=& X, \\
\label{eq004} A\otimes B &=& \emptyset, \\
\label{eq005} A\boxdot B &=& B, \\
\label{eq006} A\boxtimes B &=& A.
\end{eqnarray}
Then $(\mathcal{P}(X),\odot),(\mathcal{P}(X),\otimes),(\mathcal{P}(X),\boxdot)$ and $(\mathcal{P}(X),\boxtimes)$ are semigroups.
Furthermore, we know that $(\mathcal{P}(X),\cap,X)$ and $(\mathcal{P}(X),\cup,\emptyset)$ are monoids.


\begin{definition}\label{Fu001}
Let $A$ be a nonempty set, $\cdot$ and $\ast$ are binary operations on $A$, and $0$ is a fixed element of $A$ (i.e., a nullary operation). An algebra $A=(A,\cdot,\ast,0)$ of type $(2,2,0)$ in which $(A,\cdot,0)$ is a UP-algebra and $(A,\ast)$ is a semigroup is called
\begin{enumerate}[label=\textrm{(\arabic*)}]
\item\label{Fu001_1} a \textit{left UP-semigroup} (in short, an \textit{$l$-UP-semigroup}) if the operation \quotes{$\ast$} is left distributive over the operation \quotes{$\cdot$},
\item\label{Fu001_2} a \textit{right UP-semigroup} (in short, an \textit{$r$-UP-semigroup}) if the operation \quotes{$\ast$} is right distributive over the operation \quotes{$\cdot$},
\item\label{Fu001_3} a \textit{fully UP-semigroup} (in short, an \textit{$f$-UP-semigroup}) if the operation \quotes{$\ast$} is distributive (on both sides) over the operation \quotes{$\cdot$},
\item\label{Fu001_4} a \textit{left-left UP-semigroup} (in short, an \textit{$(l,l)$-UP-semigroup}) if the operation \quotes{$\cdot$} is left distributive over the operation \quotes{$\ast$} and the operation \quotes{$\ast$} is left distributive over the operation \quotes{$\cdot$},
\item\label{Fu001_5} a \textit{right-left UP-semigroup} (in short, an \textit{$(r,l)$-UP-semigroup}) if the operation \quotes{$\cdot$} is right distributive over the operation \quotes{$\ast$} and the operation \quotes{$\ast$} is left distributive over the operation \quotes{$\cdot$},
\item\label{Fu001_6} a \textit{left-right UP-semigroup} (in short, an \textit{$(l,r)$-UP-semigroup}) if the operation \quotes{$\cdot$} is left distributive over the operation \quotes{$\ast$} and the operation \quotes{$\ast$} is right distributive over the operation \quotes{$\cdot$},
\item\label{Fu001_7} a \textit{right-right UP-semigroup} (in short, an \textit{$(r,r)$-UP-semigroup}) if the operation \quotes{$\cdot$} is right distributive over the operation \quotes{$\ast$} and the operation \quotes{$\ast$} is right distributive over the operation \quotes{$\cdot$},
\item\label{Fu001_8} a \textit{fully-left UP-semigroup} (in short, an \textit{$(f,l)$-UP-semigroup}) if the operation \quotes{$\cdot$} is distributive (on both sides) over the operation \quotes{$\ast$} and the operation \quotes{$\ast$} is left distributive over the operation \quotes{$\cdot$},
\item\label{Fu001_9} a \textit{fully-right UP-semigroup} (in short, an \textit{$(f,r)$-UP-semigroup}) if the operation \quotes{$\cdot$} is distributive (on both sides) over the operation \quotes{$\ast$} and the operation \quotes{$\ast$} is right distributive over the operation \quotes{$\cdot$},
\item\label{Fu001_10} a \textit{left-fully UP-semigroup} (in short, an \textit{$(l,f)$-UP-semigroup}) if the operation \quotes{$\cdot$} is left distributive over the operation \quotes{$\ast$} and the operation \quotes{$\ast$} is distributive (on both sides) over the operation \quotes{$\cdot$},
\item\label{Fu001_11} a \textit{right-fully UP-semigroup} (in short, an \textit{$(r,f)$-UP-semigroup}) if the operation \quotes{$\cdot$} is right distributive over the operation \quotes{$\ast$} and the operation \quotes{$\ast$} is distributive (on both sides) over the operation \quotes{$\cdot$}, and
\item\label{Fu001_12} a \textit{fully-fully UP-semigroup} (in short, an \textit{$(f,f)$-UP-semigroup}) if the operation \quotes{$\cdot$} is distributive (on both sides) over the operation \quotes{$\ast$} and the operation \quotes{$\ast$} is distributive (on both sides) over the operation \quotes{$\cdot$}.
\end{enumerate}
\end{definition}



In what follows, let $A$ and $B$ denote UP-algebras unless otherwise specified.
The following proposition is very important for the study of UP-algebras.


\medskip

The proof of Propositions \ref{Fu002}, \ref{Fu003}, \ref{Fu004}, \ref{Fu005}, \ref{Fu006}, and \ref{Fu007} can be verified by a routine proof.

\begin{proposition}\label{Fu002} (The operations of a UP-algebra $\mathcal{P}(X)$ is left distributive over the operations of a semigroup $\mathcal{P}(X)$)
Let $X$ be a universal set. Then the following properties hold: for any $A,B,C\in \mathcal{P}(X)$,
\begin{enumerate}[label=\textrm{(\arabic*)}]
\item\label{Fu002_1} $A\cdot(B\cap C)=(A\cdot B)\cap(A\cdot C)$,
\item\label{Fu002_2} $A\cdot(B\cup C)=(A\cdot B)\cup(A\cdot C)$,
\item\label{Fu002_3} $A\ast(B\cap C)=(A\ast B)\cap(A\ast C)$,
\item\label{Fu002_4} $A\ast(B\cup C)=(A\ast B)\cup(A\ast C)$,
\item\label{Fu002_5} $A\cdot(B\otimes C)=(A\cdot B)\otimes(A\cdot C)$,
\item\label{Fu002_6} $A\ast(B\odot C)=(A\ast B)\odot(A\ast C)$,
\item\label{Fu002_7} $A\cdot(B\boxdot C)=(A\cdot B)\boxdot(A\cdot C)$,
\item\label{Fu002_8} $A\ast(B\boxdot C)=(A\ast B)\boxdot(A\ast C)$,
\item\label{Fu002_9} $A\cdot(B\boxtimes C)=(A\cdot B)\boxtimes(A\cdot C)$, and
\item\label{Fu002_10} $A\ast(B\boxtimes C)=(A\ast B)\boxtimes(A\ast C)$.
\end{enumerate}
\end{proposition}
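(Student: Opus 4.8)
The plan is to verify each of the ten identities directly by unwinding the definitions of the operations involved and applying standard set-theoretic laws. Recall that $A\cdot B=A'\cap B$ from~\eqref{eq001} and $A\ast B=A'\cup B$ from~\eqref{eq002}, where $A'$ denotes the complement of $A$ in $X$; the semigroup operations are $\cap$, $\cup$, $\otimes$ (the constant $\emptyset$ map from~\eqref{eq004}), $\boxdot$ (second-projection, $A\boxdot B=B$ from~\eqref{eq005}), and $\boxtimes$ (first-projection, $A\boxtimes B=A$ from~\eqref{eq006}). Each identity asserts that one of $\cdot,\ast$ left-distributes over one of $\cap,\cup,\otimes,\boxdot,\boxtimes$, so in every case I would substitute the two relevant definitions into both sides and reduce to a comparison of explicit set expressions.

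For the items built from the lattice operations~\ref{Fu002_1}--\ref{Fu002_4}, the verification rests on the distributivity of $\cap$ over $\cap$ and $\cup$, and of $\cup$ over $\cap$ and $\cup$. For instance, for~\ref{Fu002_1} the left side is $A'\cap(B\cap C)$ while the right side is $(A'\cap B)\cap(A'\cap C)$, and these agree by associativity, commutativity, and idempotence of $\cap$; the three companion cases~\ref{Fu002_2}--\ref{Fu002_4} follow from the corresponding distributive law together with idempotence of the outer complemented term. The constant-valued cases are even shorter: in~\ref{Fu002_5} both sides collapse to $\emptyset$ because $\otimes$ always outputs $\emptyset$ and $A'\cap\emptyset=\emptyset$, while in~\ref{Fu002_6} both sides equal $X$ since $\odot$ outputs $X$ and $A'\cup X=X$.

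The projection cases~\ref{Fu002_7}--\ref{Fu002_10} are purely formal. Since $B\boxdot C=C$, the left side of~\ref{Fu002_7} is $A\cdot C$, and the right side is $(A\cdot B)\boxdot(A\cdot C)=A\cdot C$ as well, so both reduce to the same expression without any set computation; the same pattern handles~\ref{Fu002_8}, and the first-projection identities~\ref{Fu002_9} and~\ref{Fu002_10} reduce symmetrically to $A\cdot B$ and $A\ast B$ respectively.

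I do not anticipate a genuine obstacle: every identity reduces to an elementary Boolean-algebra fact or to a triviality about constant and projection maps, which is presumably why the author labels it a routine verification. The only point requiring mild care is bookkeeping the complement correctly across the distribution in~\ref{Fu002_1}--\ref{Fu002_4}, ensuring that the complement attaches to $A$ (not to $B\cap C$ or $B\cup C$) on both sides so that no spurious De Morgan transformation is introduced. Once the substitutions are written out consistently, each line closes by inspection.
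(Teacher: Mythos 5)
Your verification is correct: each of the ten identities does reduce, exactly as you describe, to an elementary Boolean-algebra law (items \ref{Fu002_1}--\ref{Fu002_4}), to the constancy of $\odot$ and $\otimes$ (items \ref{Fu002_5}--\ref{Fu002_6}), or to the projection behaviour of $\boxdot$ and $\boxtimes$ (items \ref{Fu002_7}--\ref{Fu002_10}). This is precisely the routine unwinding the paper has in mind, since it offers no details and simply declares the proposition verifiable by a routine proof.
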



\begin{proposition}\label{Fu003} (The operations of a UP-algebra $\mathcal{P}(X)$ is right distributive over the operations of a semigroup $\mathcal{P}(X)$)
Let $X$ be a universal set. Then the following properties hold: for any $A,B,C\in \mathcal{P}(X)$,
\begin{enumerate}[label=\textrm{(\arabic*)}]
\item\label{Fu003_1} $(A\boxdot B)\cdot C=(A\cdot C)\boxdot(B\cdot C)$,
\item\label{Fu003_2} $(A\boxdot B)\ast C=(A\ast C)\boxdot(B\ast C)$,
\item\label{Fu003_3} $(A\boxtimes B)\cdot C=(A\cdot C)\boxtimes(B\cdot C)$, and
\item\label{Fu003_4} $(A\boxtimes B)\ast C=(A\ast C)\boxtimes(B\ast C)$.
\end{enumerate}
\end{proposition}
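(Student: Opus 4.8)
The plan is to observe that the operations $\boxdot$ and $\boxtimes$ of \eqref{eq005} and \eqref{eq006} are nothing but the two coordinate projections on $\mathcal{P}(X)$: $A\boxdot B=B$ always returns its second argument, while $A\boxtimes B=A$ always returns its first. This single structural fact already forces all four identities, and in fact forces right distributivity of \emph{every} binary operation over $\boxdot$ and $\boxtimes$.

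First I would dispose of \ref{Fu003_1} and \ref{Fu003_2} simultaneously. Let $\star$ be an arbitrary binary operation on $\mathcal{P}(X)$. The left-hand side $(A\boxdot B)\star C$ reduces to $B\star C$, since $\boxdot$ discards its first argument; the right-hand side $(A\star C)\boxdot(B\star C)$ likewise reduces to $B\star C$, since the outer $\boxdot$ again discards its first argument. Hence $(A\boxdot B)\star C=(A\star C)\boxdot(B\star C)$ for every $\star$, and taking $\star=\cdot$ via \eqref{eq001} gives \ref{Fu003_1} while $\star=\ast$ via \eqref{eq002} gives \ref{Fu003_2}. The identities \ref{Fu003_3} and \ref{Fu003_4} are handled symmetrically: because $\boxtimes$ discards its \emph{second} argument, both $(A\boxtimes B)\star C$ and $(A\star C)\boxtimes(B\star C)$ collapse to $A\star C$, and specializing $\star$ to $\cdot$ and then to $\ast$ yields the two identities.

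As the author remarks, the verification is routine, so I anticipate no real obstacle. The only point meriting a moment's care is that the outer projection on the right-hand side must discard exactly the argument that was already discarded on the left, so that the identity never uses any property of $\cdot$ or $\ast$ at all; this is precisely why right distributivity over the projections $\boxdot$ and $\boxtimes$ is automatic, and correspondingly why $\odot$, $\otimes$, $\cap$, and $\cup$ --- which are not projections --- do not appear here, in contrast with the left-distributive Proposition \ref{Fu002}.
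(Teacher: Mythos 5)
Your proof is correct, and it is essentially the paper's own argument: the paper supplies no written proof beyond declaring Propositions \ref{Fu002}--\ref{Fu007} routine, and the intended routine verification is exactly your unwinding of \eqref{eq005} and \eqref{eq006}, under which both sides of each identity collapse to $B\star C$ (for $\boxdot$) or $A\star C$ (for $\boxtimes$). Your added observation that the argument works for an arbitrary binary operation $\star$ --- so that right distributivity over the two projections is automatic and independent of any property of $\cdot$ or $\ast$ --- is a nice unification, and it correctly explains why $\cap$, $\cup$, $\odot$, $\otimes$ cannot appear in this proposition (compare Proposition \ref{Fu006}, where those operations produce twisted identities rather than distributive ones).
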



\begin{proposition}\label{Fu004} (The operations of a semigroup $\mathcal{P}(X)$ is left distributive over the operations of a UP-algebra $\mathcal{P}(X)$)
Let $X$ be a universal set. Then the following properties hold: for any $A,B,C\in \mathcal{P}(X)$,
\begin{enumerate}[label=\textrm{(\arabic*)}]
\item\label{Fu004_1} $A\odot(B\ast C)=(A\odot B)\ast(A\odot C)$,
\item\label{Fu004_2} $A\otimes(B\cdot C)=(A\otimes B)\cdot(A\otimes C)$,
\item\label{Fu004_3} $A\boxdot(B\cdot C)=(A\boxdot B)\cdot(A\boxdot C)$, and
\item\label{Fu004_4} $A\boxdot(B\ast C)=(A\boxdot B)\ast(A\boxdot C)$.
\end{enumerate}
\end{proposition}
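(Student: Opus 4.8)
The plan is to verify each of the four identities by direct substitution of the defining formulas \eqref{eq001}--\eqref{eq006}, exploiting the fact that the three semigroup operations $\odot$, $\otimes$, and $\boxdot$ appearing at the outer level are either constant or projection maps, which trivialises the computation.

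First I would dispose of \ref{Fu004_1} and \ref{Fu004_2}, where the outer operation is constant. For \ref{Fu004_1}, the left-hand side $A\odot(B\ast C)$ equals $X$ by \eqref{eq003}, regardless of the inner value; the right-hand side is $X\ast X$, which by \eqref{eq002} is $X'\cup X=\emptyset\cup X=X$, so the two sides coincide. Symmetrically, for \ref{Fu004_2} the left-hand side $A\otimes(B\cdot C)$ is $\emptyset$ by \eqref{eq004}, while the right-hand side is $\emptyset\cdot\emptyset=\emptyset'\cap\emptyset=X\cap\emptyset=\emptyset$ by \eqref{eq001}. The point in both cases is that $X$ is absorbing for $\ast$ and $\emptyset$ is absorbing for $\cdot$, so applying the constant map before or after the UP-operation produces the same fixed set.

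Next I would treat \ref{Fu004_3} and \ref{Fu004_4}, where the outer operation $\boxdot$ is the right projection $A\boxdot B=B$ from \eqref{eq005}. For \ref{Fu004_3} the left-hand side is simply $A\boxdot(B\cdot C)=B\cdot C$, while the right-hand side is $(A\boxdot B)\cdot(A\boxdot C)=B\cdot C$, so the identity holds immediately; \ref{Fu004_4} is identical with $\ast$ in place of $\cdot$. Here $\boxdot$ discards its first argument uniformly, so distributing it across the inner operation changes nothing.

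I do not anticipate any genuine obstacle: every identity collapses to a one-line set-theoretic computation, and the only care needed is in reading off which operation ($\cdot$ versus $\ast$, constant versus projection) occurs at each position. The real content of the proposition is the observation that constant and projection semigroup operations are automatically left distributive over the UP-operations on $\mathcal{P}(X)$.
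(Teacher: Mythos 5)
Your verification is correct, and it is exactly the ``routine proof'' the paper itself invokes (the paper gives no explicit argument for Propositions \ref{Fu002}--\ref{Fu007}, declaring them routine): each identity collapses once one substitutes the definitions \eqref{eq003}--\eqref{eq005} and notes $X\ast X=X$ and $\emptyset\cdot\emptyset=\emptyset$. The only nitpick is terminological --- $X$ is right-absorbing but not two-sided absorbing for $\ast$ (since $X\ast A=A$), though the computation you actually use, $X\ast X=X$, is all that is needed.
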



\begin{proposition}\label{Fu005} (The operations of a semigroup $\mathcal{P}(X)$ is right distributive over the operations of a UP-algebra $\mathcal{P}(X)$)
Let $X$ be a universal set. Then the following properties hold: for any $A,B,C\in \mathcal{P}(X)$,
\begin{enumerate}[label=\textrm{(\arabic*)}]
\item\label{Fu005_1} $(A\ast B)\odot C=(A\odot C)\ast(B\odot C)$,
\item\label{Fu005_2} $(A\cdot B)\otimes C=(A\otimes C)\cdot(B\otimes C)$,
\item\label{Fu005_3} $(A\cdot B)\boxtimes C=(A\boxtimes C)\cdot(B\boxtimes C)$, and
\item\label{Fu005_4} $(A\ast B)\boxtimes C=(A\boxtimes C)\ast(B\boxtimes C)$.
\end{enumerate}
\end{proposition}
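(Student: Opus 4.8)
The plan is to prove all four identities by directly unfolding the definitions \eqref{eq001}--\eqref{eq006} of the six operations and reducing each side to a canonical set-theoretic expression. Since the three semigroup operations appearing on the outside ($\odot$, $\otimes$, and $\boxtimes$) each have an especially simple form, the computations split naturally into two groups according to whether that outer operation is \emph{constant} (the cases involving $\odot$ and $\otimes$) or a \emph{projection} (the cases involving $\boxtimes$).

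First I would treat the two constant cases, namely $(A\ast B)\odot C=(A\odot C)\ast(B\odot C)$ and $(A\cdot B)\otimes C=(A\otimes C)\cdot(B\otimes C)$. On the left-hand side the outermost operation is $\odot$ (respectively $\otimes$), so by \eqref{eq003} (respectively \eqref{eq004}) the left-hand side is immediately $X$ (respectively $\emptyset$), regardless of its arguments. For the right-hand side one first applies the same constant rule to the inner factors, obtaining $X\ast X$ (respectively $\emptyset\cdot\emptyset$), and then evaluates using \eqref{eq001}--\eqref{eq002} together with the complement facts $X^{\prime}=\emptyset$ and $\emptyset^{\prime}=X$; this yields $X^{\prime}\cup X=\emptyset\cup X=X$ (respectively $\emptyset^{\prime}\cap\emptyset=X\cap\emptyset=\emptyset$), matching the left-hand side.

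Next I would treat the two projection cases, $(A\cdot B)\boxtimes C=(A\boxtimes C)\cdot(B\boxtimes C)$ and $(A\ast B)\boxtimes C=(A\boxtimes C)\ast(B\boxtimes C)$. Here the definition \eqref{eq006} of $\boxtimes$ as the left projection does all the work: the left-hand side equals $A\cdot B$ (respectively $A\ast B$) because $\boxtimes$ simply discards $C$, while on the right-hand side each factor $A\boxtimes C$ and $B\boxtimes C$ collapses to $A$ and $B$, so the right-hand side is again $A\cdot B$ (respectively $A\ast B$). Both sides therefore coincide without any further computation.

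There is essentially no conceptual obstacle here; the only point requiring care is the bookkeeping with complements in the two constant cases, where one must confirm that feeding the constants $X$ and $\emptyset$ back through $\ast$ and $\cdot$ reproduces the same constant via $X^{\prime}=\emptyset$ and $\emptyset^{\prime}=X$. Once that is checked, all four equalities hold by inspection, which is precisely why the statement can be verified by a routine proof.
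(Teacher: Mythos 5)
Your proof is correct: all four identities reduce exactly as you say, with the $\odot$ and $\otimes$ cases collapsing to the constants $X$ and $\emptyset$ (via $X^{\prime}\cup X=X$ and $\emptyset^{\prime}\cap\emptyset=\emptyset$) and the $\boxtimes$ cases following immediately from left projection. The paper itself omits the argument, stating only that Proposition \ref{Fu005} ``can be verified by a routine proof,'' and your verification is precisely that routine unfolding of the definitions \eqref{eq001}--\eqref{eq006}.
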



\begin{proposition}\label{Fu006} Let $X$ be a universal set. Then the following properties hold: for any $A,B,C\in \mathcal{P}(X)$,
\begin{enumerate}[label=\textrm{(\arabic*)}]
\item\label{Fu006_1} $(A\cap B)\cdot C=(A\cdot C)\cup(B\cdot C)$,
\item\label{Fu006_2} $(A\cup B)\cdot C=(A\cdot C)\cap(B\cdot C)$,
\item\label{Fu006_3} $(A\cap B)\ast C=(A\ast C)\cup(B\ast C)$,
\item\label{Fu006_4} $(A\cup B)\ast C=(A\ast C)\cap(B\ast C)$,
\item\label{Fu006_5} $(A\odot B)\cdot C=(A\cdot C)\otimes(B\cdot C)$, and
\item\label{Fu006_6} $(A\otimes B)\ast C=(A\ast C)\odot(B\ast C)$.
\end{enumerate}
\end{proposition}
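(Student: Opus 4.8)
The plan is to verify each of the six identities by expanding both sides according to the definitions \eqref{eq001}, \eqref{eq002}, \eqref{eq003}, \eqref{eq004} of the operations involved and then reducing to elementary Boolean set algebra. The unifying observation is that both $A\cdot B=A'\cap B$ and $A\ast B=A'\cup B$ complement their \emph{first} argument; consequently, in items \ref{Fu006_1}--\ref{Fu006_4} the set connective appearing on the left-hand side is replaced by its De Morgan dual on the right-hand side, and this flip is exactly what the complementation of the bracketed first argument produces.

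Concretely, for \ref{Fu006_1} I would compute $(A\cap B)\cdot C=(A\cap B)'\cap C=(A'\cup B')\cap C$, distribute $\cap$ over $\cup$ to get $(A'\cap C)\cup(B'\cap C)$, and recognize this as $(A\cdot C)\cup(B\cdot C)$. Items \ref{Fu006_2}, \ref{Fu006_3}, \ref{Fu006_4} follow the same template: unfolding the first operation and applying De Morgan converts the inner connective to its dual, after which either a single distributive law (for \ref{Fu006_4}, where $\cup$ distributes over $\cap$) or merely the idempotence and associativity of the surviving connective (for \ref{Fu006_2} and \ref{Fu006_3}) matches the right-hand side.

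Items \ref{Fu006_5} and \ref{Fu006_6} are degenerate, because by \eqref{eq003} and \eqref{eq004} the operations $\odot$ and $\otimes$ are the constant maps with values $X$ and $\emptyset$. For \ref{Fu006_5} the left-hand side collapses as $(A\odot B)\cdot C=X\cdot C=X'\cap C=\emptyset\cap C=\emptyset$, while the right-hand side $(A\cdot C)\otimes(B\cdot C)=\emptyset$ independently of its arguments. For \ref{Fu006_6} the left-hand side gives $(A\otimes B)\ast C=\emptyset\ast C=\emptyset'\cup C=X\cup C=X$, matching the constant right-hand side $(A\ast C)\odot(B\ast C)=X$.

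There is no genuine obstacle here; as the author notes, the proposition is routine. The only point that demands attention is keeping the complements in order: since $\cdot$ and $\ast$ complement their left argument, one must apply De Morgan to the bracketed first argument \emph{before} distributing, rather than attempting to distribute first. Once the definitions are unfolded, each of the six identities reduces to a one-line computation in the Boolean algebra $\mathcal{P}(X)$.
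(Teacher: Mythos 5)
Your proposal is correct: each identity checks out when the operations are unfolded via \eqref{eq001}--\eqref{eq004} and reduced by De Morgan, distributivity, and idempotence, and the constant operations $\odot$, $\otimes$ collapse items \ref{Fu006_5} and \ref{Fu006_6} exactly as you describe. This is precisely the routine verification the paper itself invokes (it offers no further detail), so your argument matches the intended proof.
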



\begin{proposition}\label{Fu007} Let $X$ be a universal set. Then the following properties hold: for any $A,B,C\in \mathcal{P}(X)$,
\begin{enumerate}[label=\textrm{(\arabic*)}]
\item\label{Fu007_1} $(A\cdot B)\odot C=(A\otimes C)\ast(B\otimes C)$, and
\item\label{Fu007_2} $(A\ast B)\otimes C=(A\odot C)\cdot(B\odot C)$.
\end{enumerate}
\end{proposition}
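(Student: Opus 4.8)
The plan is to verify both identities by direct computation, exploiting the fact that $\odot$ and $\otimes$ are the two constant operations on $\mathcal{P}(X)$: by \eqref{eq003} every application of $\odot$ returns $X$, and by \eqref{eq004} every application of $\otimes$ returns $\emptyset$, irrespective of its inputs. Consequently the left-hand sides of both \ref{Fu007_1} and \ref{Fu007_2} collapse immediately, and the only genuine content lies in evaluating the right-hand sides, where the UP-algebra operations $\cdot$ and $\ast$ are applied to the constant outputs of $\otimes$ and $\odot$.

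For \ref{Fu007_1}, I would first observe that $(A\cdot B)\odot C=X$ by \eqref{eq003}, since $\odot$ ignores both of its arguments and the factor $A\cdot B$ is therefore irrelevant. Next I would reduce the right-hand side in two stages: by \eqref{eq004} we have $A\otimes C=\emptyset$ and $B\otimes C=\emptyset$, so the right-hand side becomes $\emptyset\ast\emptyset$. Applying the definition \eqref{eq002} of $\ast$ then gives $\emptyset\ast\emptyset=\emptyset'\cup\emptyset=X\cup\emptyset=X$, which matches the left-hand side.

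For \ref{Fu007_2}, the argument is dual. By \eqref{eq004}, $(A\ast B)\otimes C=\emptyset$, again because $\otimes$ discards its arguments. On the right-hand side, \eqref{eq003} gives $A\odot C=X$ and $B\odot C=X$, so it reduces to $X\cdot X$; and by the definition \eqref{eq001} of $\cdot$ we obtain $X\cdot X=X'\cap X=\emptyset\cap X=\emptyset$, which again agrees with the left-hand side.

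Since neither identity depends on the particular sets $A,B,C$ once the constant operations have been resolved, no case analysis is needed and there is essentially no obstacle to overcome. The only point requiring a moment's care is keeping the complementation straight when the constant outputs are fed back into $\cdot$ and $\ast$, namely the facts $\emptyset'=X$ and $X'=\emptyset$; it is precisely these two complement computations, rather than the trivial constant operations, that make the two sides coincide.
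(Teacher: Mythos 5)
Your proof is correct: both computations check out, since $(A\cdot B)\odot C=X=\emptyset\ast\emptyset=(A\otimes C)\ast(B\otimes C)$ and $(A\ast B)\otimes C=\emptyset=X\cdot X=(A\odot C)\cdot(B\odot C)$. The paper offers no explicit proof, stating only that Proposition \ref{Fu007} ``can be verified by a routine proof,'' and your direct evaluation from the definitions \eqref{eq001}--\eqref{eq004} is exactly that routine verification.
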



\begin{proposition}\label{Fu009} Let $A=(A,\cdot,\ast,0)$ be an algebra of type $(2,2,0)$ in which $(A,\cdot,0)$ is a UP-algebra and $(A,\ast)$ is a semigroup. Then the following properties hold:
\begin{enumerate}[label=\textrm{(\arabic*)}]
\item\label{Fu009_1} if $A$ is an $l$-UP-semigroup, then $x\ast 0=0$ for all $x\in A$,
\item\label{Fu009_2} if $A$ is an $r$-UP-semigroup, then $0\ast x=0$ for all $x\in A$,
\item\label{Fu009_3} if the operation \quotes{$\cdot$} is right distributive over the operation \quotes{$\ast$}, then $x\ast x=x$ for all $x\in A$, and
\item\label{Fu009_4} $A=\{0\}$ is one and only one $(r,f)$-UP-semigroup and $(f,f)$-UP-semigroup.
\end{enumerate}
\end{proposition}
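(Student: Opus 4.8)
The plan is to handle (1)--(3) by a single well-chosen substitution into the relevant distributive identity, and then to bootstrap these three facts into the rigidity statement (4).

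For (1) I would start from left distributivity of \quotes{$\ast$} over \quotes{$\cdot$}, namely $x\ast(y\cdot z)=(x\ast y)\cdot(x\ast z)$, and set $y=z$. The idempotent law \eqref{ppt01} turns the left-hand side into $x\ast 0$ and the right-hand side into $(x\ast y)\cdot(x\ast y)=0$, so $x\ast 0=0$ falls out at once. Part (2) is the mirror image: put $y=z$ in the right-distributive identity $(y\cdot z)\ast x=(y\ast x)\cdot(z\ast x)$ and use \eqref{ppt01} on both sides to get $0\ast x=0$. For (3) I would use right distributivity of \quotes{$\cdot$} over \quotes{$\ast$}, i.e. $(y\ast z)\cdot w=(y\cdot w)\ast(z\cdot w)$, in two passes: first $y=z=w=0$ gives $0\ast 0=(0\ast 0)\cdot 0=0$ by (UP-3); then $y=z=0$, $w=x$ gives $x\ast x=(0\cdot x)\ast(0\cdot x)=(0\ast 0)\cdot x=0\cdot x=x$ using (UP-2).

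For (4) the first observation is that an $(f,f)$-UP-semigroup is in particular an $(r,f)$-UP-semigroup, since full distributivity of \quotes{$\cdot$} over \quotes{$\ast$} includes right distributivity; hence it suffices to show that every $(r,f)$-UP-semigroup is trivial. In such an algebra \quotes{$\ast$} is fully distributive over \quotes{$\cdot$}, so both (1) and (2) are available, and \quotes{$\cdot$} is right distributive over \quotes{$\ast$}. The crux is to feed the substitution $(y,z,w)=(x,0,x)$ into $(y\ast z)\cdot w=(y\cdot w)\ast(z\cdot w)$: the left side becomes $(x\ast 0)\cdot x=0\cdot x=x$ by (1) and (UP-2), while the right side becomes $(x\cdot x)\ast(0\cdot x)=0\ast x=0$ by \eqref{ppt01}, (UP-2) and (2). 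Equating the two forces $x=0$, so $A=\{0\}$. Conversely, on the one-element algebra every distributive identity reduces to $0=0$, so $\{0\}$ genuinely is an $(f,f)$- (hence $(r,f)$-) UP-semigroup, and it is the unique one.

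The substitutions in (1)--(3) are routine; the one real idea is the choice $(x,0,x)$ in (4), which is exactly what makes the zero-absorption facts $x\ast 0=0$ and $0\ast x=0$ pull in opposite directions and collapse $x$ to $0$. I expect that locating this substitution, rather than verifying it, is the only genuine obstacle.
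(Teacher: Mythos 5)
Your proposal is correct and follows essentially the same route as the paper: parts (1)--(3) via the same substitutions into the distributive laws together with \eqref{ppt01}, (UP-2), and (UP-3), and part (4) via exactly the paper's key computation $x=0\cdot x=(x\ast 0)\cdot x=(x\cdot x)\ast(0\cdot x)=0\ast x=0$, i.e.\ the substitution $(x,0,x)$ into right distributivity of \quotes{$\cdot$} over \quotes{$\ast$}. The only (harmless) additions are your explicit remark that $(f,f)$ implies $(r,f)$ and the trivial verification that $\{0\}$ does satisfy all the identities, both of which the paper leaves implicit.
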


\begin{proof}
\ref*{Fu009_1} Assume that $A$ is an $l$-UP-semigroup. Then, by (\ref{ppt01}), we have
\begin{center}
$x\ast0=x\ast(0\cdot0)=(x\ast0)\cdot(x\ast0)=0$ for all $x\in A$.
\end{center}
\ref*{Fu009_2} Assume that $A$ is an $r$-UP-semigroup. Then, by (\ref{ppt01}), we have
\begin{center}
$0\ast x=(0\cdot0)\ast x=(0\ast x)\cdot(0\ast x)=0$ for all $x\in A$.
\end{center}
\ref*{Fu009_3} Assume that the operation \quotes{$\cdot$} is right distributive over the operation \quotes{$\ast$}.
Then, by (UP-3), we have
\begin{center}
$0=(0\ast0)\cdot0=(0\cdot0)\ast(0\cdot0)=0\ast0$.
\end{center}
Thus, by (UP-2), we have
\begin{center}
$x=0\cdot x=(0\ast0)\cdot x=(0\cdot x)\ast(0\cdot x)=x\ast x$ for all $x\in A$.
\end{center}
\ref*{Fu009_4} By (UP-2), (\ref{ppt01}), \ref{Fu009_1} and \ref{Fu009_2}, we have
\begin{center}
$x=0\cdot x=(x\ast0)\cdot x=(x\cdot x)\ast(0\cdot x)=0\ast x=0$ for all $x\in A$.
\end{center}
Hence, $A=\{0\}$ is one and only one $(r,f)$-UP-semigroup and $(f,f)$-UP-semigroup.
\end{proof}


\begin{example}\label{Fu008} Let $A=\{0,1,2,3\}$ be a set with a binary operation $\cdot$ defined by the following Cayley table:
\begin{equation*}
\begin{array}{c|cccc}
  \cdot & 0 & 1 & 2 & 3 \\
  \hline
  0 & 0 & 1 & 2 & 3 \\
  1 & 0 & 0 & 2 & 3 \\
  2 & 0 & 1 & 0 & 3 \\
  3 & 0 & 1 & 2 & 0
\end{array}
~\text{and}~
\begin{array}{c|cccc}
  \ast & 0 & 1 & 2 & 3 \\
  \hline
  0 & 0 & 0 & 0 & 0 \\
  1 & 0 & 0 & 0 & 0 \\
  2 & 0 & 0 & 0 & 1 \\
  3 & 0 & 0 & 1 & 0
\end{array}
\end{equation*}
Then $(A,\cdot,\ast,0)$ is an $f$-UP-semigroup.
\end{example}

 \clearpage

Let $X$ be a universal set.
Then, by above propositions and an example, we get:

$$\begin{tabular}{|l|l|}
  \hline
\multicolumn{1}{|c|}{\textbf{Types of algebras}} &  \multicolumn{1}{|c|}{\textbf{Examples}} \\
  \hline
$l$-UP-semigroup &  $(\mathcal{P}(X),\ast,\odot,X)$ (see Proposition \ref{Fu004} \ref{Fu004_1}) \\
 &  $(\mathcal{P}(X),\cdot,\otimes,\emptyset)$ (see Proposition \ref{Fu004} \ref{Fu004_2}) \\
 &  $(\mathcal{P}(X),\cdot,\boxdot,\emptyset)$ (see Proposition \ref{Fu004} \ref{Fu004_3}) \\
 &  $(\mathcal{P}(X),\ast,\boxdot,X)$ (see Proposition \ref{Fu004} \ref{Fu004_4}) \\
$r$-UP-semigroup &  $(\mathcal{P}(X),\ast,\odot,X)$ (see Proposition \ref{Fu005} \ref{Fu005_1}) \\
 &  $(\mathcal{P}(X),\cdot,\otimes,\emptyset)$ (see Proposition \ref{Fu005} \ref{Fu005_2}) \\
  &  $(\mathcal{P}(X),\cdot,\boxtimes,\emptyset)$ (see Proposition \ref{Fu005} \ref{Fu005_3}) \\
   &  $(\mathcal{P}(X),\ast,\boxtimes,X)$ (see Proposition \ref{Fu005} \ref{Fu005_4}) \\
$f$-UP-semigroup  & $(\mathcal{P}(X),\ast,\odot,X)$ (see Propositions \ref{Fu004} \ref{Fu004_1} and \ref{Fu005} \ref{Fu005_1}) \\
 &  $(\mathcal{P}(X),\cdot,\otimes,\emptyset)$ (see Propositions \ref{Fu004} \ref{Fu004_2} and \ref{Fu005} \ref{Fu005_2}) \\
  &  $(A,\cdot,\ast,0)$ (see Example \ref{Fu008}) \\
$(l,l)$-UP-semigroup  & $(\mathcal{P}(X),\cdot,\boxdot,\emptyset)$ (see Propositions \ref{Fu004} \ref{Fu004_3} and \ref{Fu002} \ref{Fu002_7}) \\
  & $(\mathcal{P}(X),\ast,\boxdot,X)$ (see Propositions \ref{Fu004} \ref{Fu004_4} and \ref{Fu002} \ref{Fu002_8}) \\
$(r,l)$-UP-semigroup  & $(\mathcal{P}(X),\cdot,\boxdot,\emptyset)$ (see Propositions \ref{Fu004} \ref{Fu004_3} and \ref{Fu003} \ref{Fu003_1}) \\
  & $(\mathcal{P}(X),\ast,\boxdot,X)$ (see Propositions \ref{Fu004} \ref{Fu004_4} and \ref{Fu003} \ref{Fu003_2}) \\
$(l,r)$-UP-semigroup  &  $(\mathcal{P}(X),\ast,\odot,X)$ (see Propositions \ref{Fu005} \ref{Fu005_1} and \ref{Fu002} \ref{Fu002_6}) \\
 &  $(\mathcal{P}(X),\cdot,\otimes,\emptyset)$ (see Propositions \ref{Fu005} \ref{Fu005_2} and \ref{Fu002} \ref{Fu002_5}) \\
  &  $(\mathcal{P}(X),\cdot,\boxtimes,\emptyset)$ (see Propositions \ref{Fu005} \ref{Fu005_3} and \ref{Fu002} \ref{Fu002_9}) \\
   &  $(\mathcal{P}(X),\ast,\boxtimes,X)$ (see Propositions \ref{Fu005} \ref{Fu005_4} and \ref{Fu002} \ref{Fu002_10}) \\
$(r,r)$-UP-semigroup  &  $(\mathcal{P}(X),\cdot,\boxtimes,\emptyset)$ (see Propositions \ref{Fu005} \ref{Fu005_3} and \ref{Fu003} \ref{Fu003_3}) \\
   &  $(\mathcal{P}(X),\ast,\boxtimes,X)$ (see Propositions \ref{Fu005} \ref{Fu005_4} and \ref{Fu003} \ref{Fu003_4}) \\
$(f,l)$-UP-semigroup  &  $(\mathcal{P}(X),\cdot,\boxdot,\emptyset)$ (see Propositions \ref{Fu004} \ref{Fu004_3}, \ref{Fu002} \ref{Fu002_7}, and \ref{Fu003} \ref{Fu003_1}) \\
  & $(\mathcal{P}(X),\ast,\boxdot,X)$ (see Propositions \ref{Fu004} \ref{Fu004_4}, \ref{Fu002} \ref{Fu002_8}, and \ref{Fu003} \ref{Fu003_2}) \\
$(f,r)$-UP-semigroup  &  $(\mathcal{P}(X),\cdot,\boxtimes,\emptyset)$ (see Propositions \ref{Fu005} \ref{Fu005_3}, \ref{Fu002} \ref{Fu002_9}, and \ref{Fu003} \ref{Fu003_3}) \\
   &  $(\mathcal{P}(X),\ast,\boxtimes,X)$ (see Propositions \ref{Fu005} \ref{Fu005_4}, \ref{Fu002} \ref{Fu002_10}, and \ref{Fu003} \ref{Fu003_4}) \\
$(l,f)$-UP-semigroup  &  $(\mathcal{P}(X),\ast,\odot,X)$ (see Propositions \ref{Fu004} \ref{Fu004_1}, \ref{Fu002} \ref{Fu002_6}, and \ref{Fu005} \ref{Fu005_1}) \\
 &  $(\mathcal{P}(X),\cdot,\otimes,\emptyset)$ (see Propositions \ref{Fu004} \ref{Fu004_2}, \ref{Fu002} \ref{Fu002_5}, and \ref{Fu005} \ref{Fu005_2}) \\
$(r,f)$-UP-semigroup  &  $\{0\}$ is one and only one $(r,f)$-UP-semigroup  \\
$(f,f)$-UP-semigroup  &  $\{0\}$ is one and only one $(f,f)$-UP-semigroup \\
    \hline
 \end{tabular}$$

 \clearpage


Hence, we have the following diagram:

\begin{figure}[h]
\centering
\includegraphics[width=0.9\textwidth]{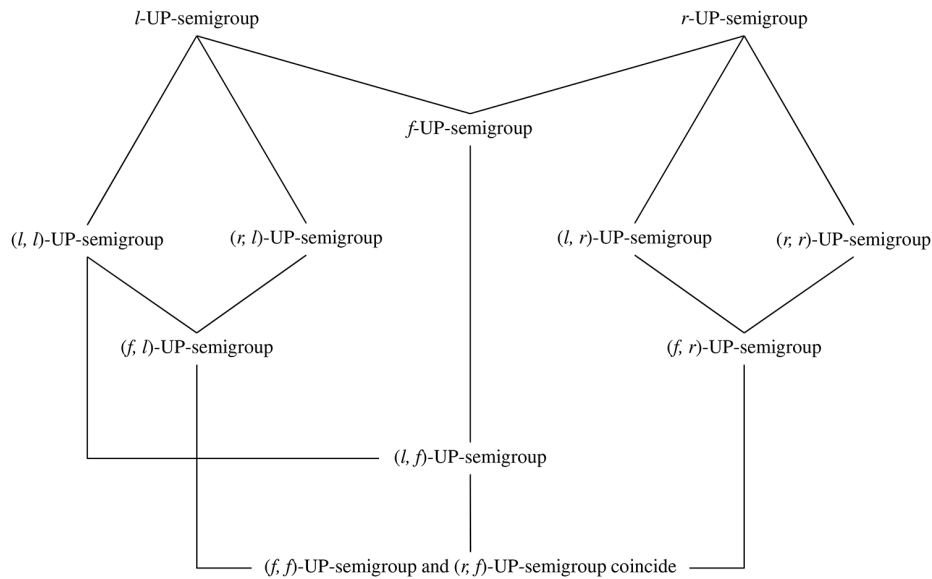}
\caption{New algebras of type (2,2,0)}
\end{figure}


\section*{Conclusion}

We have introduced the notions of left UP-semigroups, right UP-semigroups, fully UP-semigroups, left-left UP-semigroups, right-left UP-semigroups, left-right UP-semigroups, right-right UP-semigroups, fully-left UP-semigroups, fully-right UP-semigroups, left-fully UP-semigroups, right-fully UP-semigroups and fully-fully UP-semigroups, and have found examples.
We have that right-fully UP-semigroups and fully-fully UP-semigroups coincide, and it is only $\{0\}$.
In further study, we will apply the notion of fuzzy sets and fuzzy soft sets to the theory of all above notions.


\section*{Acknowledgment}
The author wish to express their sincere thanks to the referees for the valuable suggestions which lead to an improvement of this paper.


{\bf Received: \today}


\begin{thebibliography}{1}

\bibitem{Ahn2011pp8}
S.~S. Ahn and Y.~H. Kim.
\newblock On {BE}-semigroups.
\newblock {\em Int. J. Math. Math. Sci.}, 2011:Article ID 676020, 2011.

\bibitem{Endam2015pp2901}
J.~C. Endam and J.~P. Vilela.
\newblock On {JB}-semigroups.
\newblock {\em Appl. Math. Sci.}, 9(59):2901--2911, 2015.

\bibitem{Iampan2014}
A.~Iampan.
\newblock A new branch of the logical algebra: {UP}-algebras.
\newblock {\em J. Algebra Relat. Top.}, 5(1):35--54, 2017.

\bibitem{Iampan2018book}
A.~Iampan.
\newblock {\em {UP}-algebras: the beginning}.
\newblock Copy House and Printing, Thailand, 2018.

\bibitem{Jun1993pp59}
Y.~B. Jun, S.~M. Hong, and E.~H. Roh.
\newblock {BCI}-semigroups.
\newblock {\em Honam Math. J.}, 15(1):59--64, 1993.

\bibitem{Jun1998pp309}
Y.~B. Jun, X.~L. Xin, and E.~H. Roh.
\newblock A class of algebras related to {BCI}-algebras and semigroups.
\newblock {\em Soochow J. Math.}, 24(4):309--321, 1998.

\bibitem{Kim2006pp67}
K.~H. Kim.
\newblock On structure of {KS}-semigroups.
\newblock {\em Int. Math. Forum}, 1(2):67--76, 2006.

\bibitem{Sultana2016pp1}
F.~Sultana and M.~A. Chaudhary.
\newblock {BCH}-semigroup ideals in {BCH}-semigroups.
\newblock {\em Palestine J. Math.}, 5(1):1--5, 2016.

\end{thebibliography}
\end{document}